\theoremstyle{plain}
\newtheorem{theorem}{Theorem}[section]
\newtheorem{lemma}[theorem]{Lemma}
\newtheorem{corollary}[theorem]{Corollary}
\newtheorem{definition}[theorem]{Definition}
\newtheorem{example}[theorem]{Example}
\theoremstyle{definition}
\newtheorem{remark}[theorem]{Remark}
\newtheorem{remarks}[theorem]{Remarks}
\newcommand\<[1]{\langle\,#1\,\rangle}
\newcommand\norm[1]{\Vert #1 \Vert}
\newcommand{\mB}{\mathscr{B}}
\newcommand{\mA}{\mathscr{A}}
\newcommand{\la}{\langle}
\newcommand{\ra}{\rangle}
\newcommand{\Cf}[1]{\ensuremath \mbox{\large${\mathbf{1}}$}_{#1}}
\def\wasabi{wscai }
\newcommand{\bd}{{\ast\ast}}
\newcommand\restr[2]{{
  \left.\kern-\nulldelimiterspace 
  #1 
  \vphantom{|} 
  \right|_{_{#2}} 
  }}
\newcommand{\C}{\mathbb{C}}
\newcommand{\T}{\mathbb{T}}
\newcommand{\N}{\mathbb{N}}
\newcommand{\A}{\mathscr{A}}
\newcommand{\G}{\mbox{\tiny $G$}}
\newcommand{\dmg}{\ensuremath  d\mathrm{m}_{\G}}
\newcommand{\mg}{\ensuremath  \mathrm{m}_{\G}}
\newcommand{\mJ}{{\mathcal J}}
\begin{document}

\title[A note on a question of Garth Dales]{A note on a question of Garth Dales: Arens
regularity as a three space property}

\author[Filali, Galindo ]{Mahmoud Filali,  Jorge Galindo}

\address{Mahmoud Filali. Department of Mathematical Sciences,
    University of Oulu, Oulu Finland;\hfill\break \noindent E-mail:  mfilali@cc.oulu.fi}

\address{ Jorge Galindo. Instituto Universitario de Matem\'aticas y Aplicaciones (IMAC),
Universitat Jaume I, E-12071, Castell\'on, Spain;\hfill\break \noindent E-mail:  jgalindo@uji.es}


\subjclass[2020]{Primary 22D15; Secondary  43A46, 43A60,47C15}

\keywords{Arens product, Arens-regular algebra, Banach algebra, Riesz set, Rosenthal set, Banach module, co-Rosenthal ideal}

\begin{abstract}
Garth Dales asked whether a Banach algebra $\mA$ with an Arens regular closed ideal $\mJ$
and an Arens regular quotient $\mA/\mJ$ is necessarily Arens regular. We prove in this note  that, for a class of Banach algebras including the standard
algebras in harmonic analysis, Garth's conditions force the algebra to be  even reflexive.
We also give examples of Banach algebras with Garth's conditions, that are not Arens regular.

\end{abstract}
\dedicatory{To the memory of Harold Garth Dales}
\maketitle

\section{Introduction}
Before Garth Dales sadly  passed away, he asked   the authors \cite{garthlet}, and presumably other colleagues, whether Arens regularity was  a three-space property, that is, whether  a Banach algebra $\mA$ with an Arens regular closed ideal $\mJ$ yielding  an Arens regular quotient $\mA/\mJ$ is necessarily Arens regular.

In this note, we first consider  weakly sequentially complete Banach algebras $\mA$ which are    ideals in their  second dual $\mA^{**}$ and   have a contractive approximate identity.
We call such algebras \emph{wscai algebras}.
 When  such a Banach algebra is a direct summand of  its multiplier algebra, i.e., when $M(\mA)=\mA\oplus B$ where $B$ is a closed subalgebra of
$M(\mA)$,
we prove that Garth's question has a positive answer, forcing  $\mA$ to be  even reflexive.

 We then provide  examples  demonstrating  that, in general, Arens regularity fails the    three-space property. We see that such  examples  can be found in algebras that exhibit two of the following properties: (1) $\mA$ is an  ideal in $\mA^\bd$, (2) $\mA$ is weakly sequentially complete and (3) $\mA$ has an (approximate) identity.

Following \cite{arens51,arens51mo}, the first Arens product on $\mA^{**}$ is defined  through the following three stages. For $f\in \mA^*,$ $\varphi,\psi\in \mA$ and $m,n\in \mA^{**},$
define
\begin{equation}\label{260823a}
\langle f\cdot \varphi,\psi\rangle= \langle f, \varphi\psi \rangle,\quad
\langle n\cdot f,\varphi\rangle= \langle n, f\cdot \varphi \rangle\quad
\langle m n, f\rangle=\langle m, n\cdot f\rangle.
\end{equation}
 In an   analogous way,  the \emph{second Arens product} in $\mA^{**}$
 is defined  via the following relations for $f\in \mA^*,$ $\varphi,\psi\in \mA$ and $m,n\in \mA^{**},$
\begin{equation}\label{030923a}
\la \varphi \cdot f, \psi \ra= \la f, \psi \varphi \ra, \quad \la f\cdot m , \varphi \ra =\la m , \varphi \cdot f \ra , \quad \la m \diamond n , f\ra =\la n  ,f\cdot m \ra .
\end{equation}
With either of these products, the Banach space $\mA^\bd$ is made into a Banach algebra containing $\mA$ as a subalgebra.

It is clear from the way the products are defined that the translations
\[m\mapsto m n\quad\text{and}\quad m\mapsto n\diamond m: \mA^{**}\to \mA^{**}\]  are weak$^\ast$-weak$^\ast$-continuous for
 any $n\in \mA^\bd$,
and the translations
\begin{equation}\label{regular} m\mapsto nm\quad\text{and}\quad m\mapsto m\diamond n: \mA^{**}\to \mA^{**}\end{equation} are weak$^\ast$-weak$^\ast$-continuous for  any $n\in \mA$.
The algebra $\mA$ is {\it Arens regular} when one of the latter translations (and then both are) is weak$^\ast$-weak$^\ast$-continuous not only for the elements $n\in \mA$ but for every
$n\in \mA^{\bd}$. This is the same as having $mn=m\diamondsuit n$, for every $m, n \in \mA^\bd$.
When the continuity of each of the two translations in (\ref{regular})   holds for no $n\in \mA^\bd \setminus \mA$, we say that $\mA$ is \emph{strongly Arens irregular}.


It may be worth remarking that  an algebra $\mA$ may be  both Arens regular and strongly Arens irregular.
This will be the case  if and only if $\mA$ is reflexive.

The relevance of the space $WAP(\mA)$ of {\it weakly almost periodic functionals} on $\mA$ was first noticed, independently, by    Pym and Young   in \cite{pym} and \cite{young73}. This  is the space of all $f\in \mA^*$ such that the linear
map
\[\varphi \mapsto  \varphi \cdot f:\mA\to\mA^*\]
is weakly compact.  A consequence of  Grothendieck's criterion is that  $WAP(\mA)$ is actually the subspace of  $\mA^\ast$  which consists of those $f\in \mA^\ast$ such that   \[\langle m  n , f\rangle = \langle m \diamondsuit  n , f\rangle
\quad\text{for every} \quad m, n \in \mA^{**}.\]
So, $\mA$ is
Arens regular if and only if  $WAP(\mA) = \mA^{*}$.

By a bounded right approximate identity (\emph{brai} for short), we mean a bounded net $ \{e_\alpha\}_\alpha$ in $\mA$ such that for all $\varphi \in \mA$, $\varphi e_\alpha\to \varphi$ in $\mA$.
 A bounded left
approximate identity (blai) is defined similarly. A bounded approximate
identity (bai) is a net which is both a blai and a brai. When the terms of an approximate identity are all contained in the unit ball of $\mA$, we call it  \emph{contractive}.  The accumulation points of a brai $ \{e_\alpha\}_\alpha$ in $\mA^\bd$ are right identities in $\mA^\bd$  for the first Arens product, and the accumulation points of a  blai are left identities in $\mA^\bd$ for the second Arens product. So, the accumulation points of bais are right identities for the first Arens product and a left identities for the second, such identities are
known as mixed identities.

Suppose that  $ \mA $ is a weakly sequentially complete Banach algebra that is an ideal in  $ \mA^{**} $ and contains a contractive approximate identity  and let $ e $ be a mixed identity of $ \mA^{**} $, i.e., $\mA$ is a \wasabi algebra.
The second dual $\mA^{**}$ of $\mA $ has then  the direct sum decomposition \[\mA^{**}=e\mA^{**}\oplus WAP(\mA)^\perp,\] with
\begin{align*}WAP(\mA)^\perp&=\left\{m\in \mA^\bd \colon \<{m,f}=0, \mbox{ for any $f\in WAP(\mA)$}\right\}\\&
= (1-e)\mA^{**}.\end{align*} The subspace  $WAP(\mA)^\perp$ is actually  a closed ideal in $\mA^{**}$ and
\[\mA^{**}\,WAP(\mA)^\perp=WAP(\mA)^\perp\diamondsuit \mA^{**}=\{0\}.\] The Banach algebras
$e\mA^{**}$,   $WAP(\mA)^*$ and
 the multiplier algebra $M(\mA)$ of $\mA$ are isometrically isomorphic.  Nevertheless, it may be worthwhile to recall the educating remark
put by Baker, Lau and Pym in \cite[page 190]{BLP}, that $e\mA^{**}$ and $WAP(\mA)^*$ (and so $M(\mA)$)  need not be  isomorphic
when they carry their natural   weak$^*$ topologies induced, respectively  from $\mA^{*}$  and $WAP(\mA)$.
 For the details, see \cite{BLP}, \cite{dal00} and \cite{EFG2}.

As noted  in \cite{EFG2},  if $\mJ$ is a closed ideal in $\mA,$ then $\mJ^{**}$ admits an analogous decomposition. Denoting by $i^\ast \colon \mA^\ast\to\mJ^\ast$  the restriction map, adjoint to the inclusion map $i\colon \mJ \to \mA$, one has

 \begin{equation}\label{desc}\mJ^{**}=e\mJ^{**}  \oplus i^*(WAP(\mA))^\perp=e\mJ^{**} \oplus (\mA\cdot\mJ^*)^\perp,\end{equation}
where now \begin{align*}i^*(WAP(\mA))^\perp&=\left\{m\in \mJ^\bd \colon \<{m,i^*(f)}=0, \mbox{ for every $f\in WAP(\mA)$}\right\}\\&=
\left\{m\in \overline{\mJ} \colon \<{m,f}=0, \mbox{ for every $f\in WAP(\mA)$}\right\},\end{align*}
and $\overline \mJ$ denotes the weak$^*$-closure of $\mJ$ in $\mA^{**}$.
 The right ideal  $e\mJ^{**}$ may be identified with $\overline{\mJ}^w$, the weak$^*$-closure of
$\mJ$ in $WAP(\mA)^*$. The properties of this decomposition, discussed in \cite{EFG2}, are summarized below\label{page 3intro}
\begin{enumerate}
\item $\overline{\mJ}^w $ is a closed ideal of $M(\mA),$
\item $i^*(WAP(\mA))^\perp$ is a closed ideal of $\mJ^{**},$
\item  $\overline \mJ^w $ is isometrically  isomorphic to  both     $  e\mJ^{**}$  and $\mJ^{**}\diamondsuit e,$
\item $i^*(WAP(\mA))^\perp=(1-e)\mJ^{**}=\mJ^{**}\diamondsuit (1-e),$
\item $\mJ^{**}\,i^*(WAP(\mA))^\perp=i^*(WAP(\mA))^\perp\diamondsuit \mJ^{**}=\{0\}.$
\end{enumerate}

%

\subsection{Some special ideals of $L^1(G)$}
During the following discussion $G$ will denote a compact Abelian group  with Haar measure $\mg$  and $\widehat{G}$ the  group of all continuous homomorphisms of $G$ into the multiplicative group $\T$ of all complex numbers of modulus 1. If $M(G)$ denotes the space of  bounded Borel  measures on $G$  and $  \mu \in M(G)$,  the \emph{Fourier-Stieltjes transform} of $\mu$ is the bounded function $\widehat{  \mu}\colon \widehat{G}\to \C$ defined by \[\widehat{\mu}(\chi)=\int_{_G} \chi(t^{-1})d\mu(t) \mbox{ for each $\chi \in \widehat{G}$.}
\]
If $ X$ is a linear subspace of $M(G) $ and  $ E \subset \widehat G $, we denote by $X_E  $   the  subspace of $ X $ consisting of all elements in $X$ with  Fourier-Stieltjes  transform supported in $E$, that is
\[X_E   = \lbrace  \mu \in X:  \widehat{\mu}(\chi) =0 ~ \text {for}~ \chi \in \widehat{G}\setminus E \rbrace.\]
We will always regard  $L^1(G)$ as an ideal in $M(G)$ through the embedding $f\mapsto f\cdot \dmg$.

In Harmonic Analysis, there has  been  a great interest in subsets of $\widehat G$ making $X_E=Y_E$ for some subspace  $Y$ of $X.$
For instance {\it Riesz sets} are those for which  $ M_E(G) = L^1_E(G)$,
and \emph{ Rosenthal sets } are those for which $L^\infty_E(G)=C_E(G)$. We will say that  $E\subseteq \widehat{G}$ is a  \emph{co-Rosenthal set} when its complement $\widehat G\setminus E$ is a Rosenthal set,
that is, when,    $f\in L^\infty(G)$ and  $\widehat{f}(E)=\{0\}$ imply that $f$ is continuous.

We will be  assuming      that $L^\infty(G)$ is defined so that $L^1(G)^\ast=L^\infty(G) $, through the duality
\[ \<{f,\varphi}=\int \varphi(x)\overline{f(x)}\dmg(x), \quad f\in L^\infty(G),\;\varphi\in L^1(G) .\]
Since   $\widehat{f}(\chi)=\<{f,\chi}$, for every $f\in L^\infty(G)$ and $\chi \in \widehat{G}$,
and since  $E\subseteq L^1_E(G)$, it is clear that $L^1_E(G)^\perp\subseteq L^\infty_{\widehat G\setminus E}(G)$. On the other hand,  we may $L^1$-approximate
the functions $\varphi$ in  $L^1_{E}(G)$ by trigonometric polynomials $\varphi_n$ spanned by characters from $E$. Since $\<{f, \varphi_n}=0\quad\text{ for every}\quad f\in L^\infty_{\widehat G\setminus E}(G),$
we see that $\<{f, \varphi}=0\quad\text{ for every}\quad f\in L^\infty_{\widehat G\setminus E}(G)$.
As a consequence, \[L^1_E(G)^\perp=\{f\in L^\infty(G):\la f ,\varphi\ra=0\;\text{for all}\;\varphi\in L^1_{E}(G)\}= L_{\widehat{G}\setminus E}^\infty(G),\]
and, accordingly,  $E$ is a   co-Rosenthal set if and only if
$L^1_E(G)^\perp \subseteq C(G).$
Now, it happens that a functional on $L^1(G)$ is  weakly almost periodic if and only if it is continuous (one direction is a theorem of    \"Ulger \cite{ulger} valid for any locally compact group, the other only works for compact groups and  can be deduced from Grothendieck's double limit criterion), so 
\[L^1_E(G)^\perp\subseteq WAP(L^1(G)),\]
whenever  $E$ is a co-Rosenthal set.
%
%

%
\subsection{Riesz and co-Rosenthal ideals}
Closed ideals of  $L^1(G)$ are always of the form $L^1_E(G)$ for some subset $E$
in $\widehat G$ (see \cite[Theorem 38.7]{HR70}). We may therefore describe the ideals of  $L^1(G)$ through the properties of the  corresponding set in $\widehat G$
 and  may rightfully speak of  Riesz or  co-Rosenthal \emph{ideals} of $L^1(G)$.
 Under this point of view,  bearing in mind the discussions earlier in this section, the following definitions
are
 genuine extensions of the concepts of  Riesz and co-Rosenthal set.

\begin{definition}\label{def1}
Let $\mA$ be a Banach algebra  and let $\mJ$ be a closed ideal of $\mA$.
  \begin{itemize}
\item[--] We say that $\mJ$ is \emph{co-Rosenthal} if $\mJ^\perp \subseteq WAP(\mA).$
\item[--] Assume in addition that  $\mA$ has  a  mixed identity $e\in  \mA^{**} $, we say that $\mJ$ is a  \emph{ Riesz ideal} if  $e\mJ^{**}  = \mJ$.
\end{itemize}
\end{definition}

 \begin{remark}
   The definition of Riesz ideal does not depend on the choice of the mixed identity of $\mA^\bd$. Indeed, once one knows that $e\mJ^{**}=\mJ$, for some right identity $e\in \mJ^\bd$, then $f\mJ^\bd=fe\mJ^\bd=f\mJ=\mJ$  for any other mixed identity $f\in \mJ^\bd$.

      When $\mA$ has a contractive approximate identity, $\mJ$ being Riesz turns to be equivalent to being weak$^\ast$-closed in the multiplier algebra $M(\mA)$, see the identifications of $M(\mA)$ with $e\mA^\bd$ and $WAP(\mA)^\ast$ described  in the Introduction. So, if  $G$ is a   compact group and $\mA=L^1(G)$, $\mJ=L_E^1(G)$, $E\subseteq \widehat{G}$, is Riesz when $\mJ$ is weak$^\ast$-closed in $M(G)$, and so $L_E^1(G)=M_E(G)$. In the same vein, when $G$ is a  discrete amenable group and $E\subseteq G$, an ideal $ A_E(G)=\{u\in A(G) \colon u(G\setminus E)=\{0\}\}$  is Riesz if it is weak$^\ast$-closed in $B(G)$, and so $A_E(G)=B_E(G)$.
 \end{remark}

\begin{lemma} \label{co-R} For a closed ideal $\mJ$ in $\mA$, the following statements are equivalent.
\begin{enumerate}
\item $\mJ$ is co-Rosenthal.
\item $i^{**}\bigl(i^*(WAP(\mA))^\perp\bigr)=WAP(\mA)^\perp$.
\item The quotient Banach algebra $\mA/\mJ$ is Arens regular.
\end{enumerate}
\end{lemma}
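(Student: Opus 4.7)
The plan is to establish the two equivalences $(1)\Leftrightarrow(3)$ and $(1)\Leftrightarrow(2)$ separately, each by a short duality argument.

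For $(1)\Leftrightarrow(3)$, let $\pi\colon \mA\to \mA/\mJ$ be the quotient homomorphism. Its adjoint $\pi^*$ isometrically embeds $(\mA/\mJ)^*$ onto $\mJ^\perp\subseteq \mA^*$, and its biadjoint $\pi^{**}\colon \mA^{**}\to (\mA/\mJ)^{**}$ is surjective and a homomorphism for \emph{both} Arens products (a standard iterated-weak$^*$-limit argument using the separate continuity recalled after \eqref{regular}). Hence, for every $F\in (\mA/\mJ)^*$ and $m,n\in \mA^{**}$,
\[\<{\pi^{**}(m)\,\pi^{**}(n),F}=\<{mn,\pi^*(F)}\quad\text{and}\quad \<{\pi^{**}(m)\diamond \pi^{**}(n),F}=\<{m\diamond n,\pi^*(F)}.\]
Since $\pi^{**}$ is onto, $\mA/\mJ$ is Arens regular iff the two right-hand sides agree for all $m,n,F$, which by the Grothendieck-type characterization of $WAP(\mA)$ recalled in the introduction amounts to $\mJ^\perp=\pi^*((\mA/\mJ)^*)\subseteq WAP(\mA)$, i.e., to $(1)$.

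For $(1)\Leftrightarrow(2)$, I would first unpack the left-hand side of $(2)$. Using the identity $\<{m,i^*(f)}=\<{i^{**}(m),f}$ for $m\in \mJ^{**}$ and $f\in \mA^*$, one sees $m\in i^*(WAP(\mA))^\perp$ iff $i^{**}(m)\in WAP(\mA)^\perp$. Since $i^{**}$ is injective with image $\mJ^{\perp\perp}$ (the weak$^*$-closure of $\mJ$ in $\mA^{**}$), this yields
\[i^{**}\bigl(i^*(WAP(\mA))^\perp\bigr)=\mJ^{\perp\perp}\cap WAP(\mA)^\perp,\]
so $(2)$ is equivalent to the inclusion $WAP(\mA)^\perp\subseteq \mJ^{\perp\perp}$. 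The direction $(1)\Rightarrow(2)$ is then immediate by taking polars. For the converse, assume $WAP(\mA)^\perp\subseteq \mJ^{\perp\perp}$ and suppose, toward a contradiction, that $f\in \mJ^\perp\setminus WAP(\mA)$. Because $WAP(\mA)$ is norm-closed in $\mA^*$, Hahn--Banach produces $m\in WAP(\mA)^\perp$ with $\<{m,f}\neq 0$; but then the hypothesis places $m\in \mJ^{\perp\perp}$, which together with $f\in \mJ^\perp$ forces $\<{m,f}=0$, a contradiction.

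The only slightly delicate point is the final Hahn--Banach step, which relies on the standard but essential fact that $WAP(\mA)$ is a norm-closed subspace of $\mA^*$; without norm-closedness one would only recover $\mJ^\perp\subseteq \overline{WAP(\mA)}^{w^*}$. Everything else is routine bookkeeping with adjoints, annihilators, and the homomorphism properties of $\pi^{**}$ and $i^{**}$.
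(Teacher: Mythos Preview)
Your proof is correct and follows essentially the same route as the paper's. For $(1)\Leftrightarrow(3)$ both arguments identify $(\mA/\mJ)^*$ with $\mJ^\perp$ via $\pi^*$ and invoke the Grothendieck characterization of $WAP(\mA)$; for $(1)\Leftrightarrow(2)$ the paper also reduces the question to whether $WAP(\mA)^\perp\subseteq \mJ^{\perp\perp}$, though it does so element-by-element rather than stating your clean identity $i^{**}\bigl(i^*(WAP(\mA))^\perp\bigr)=\mJ^{\perp\perp}\cap WAP(\mA)^\perp$. Your explicit mention that the Hahn--Banach step in $(2)\Rightarrow(1)$ needs norm-closedness of $WAP(\mA)$ is a point the paper leaves implicit.
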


\begin{proof} Let $m\in i^*(WAP(\mA))^\perp$.
Then \[\la i^{**}(m),f\rangle=\<{ m,i^*(f)}=0,\quad\text{for every}\quad f\in WAP(\mA),\] i.e., $i^{**}(m)\in WAP(\mA)^\perp.$ Hence, the inclusion $i^\bd\bigl(i^*(WAP(\mA))^\perp\bigr)\subseteq WAP(\mA)^\perp $  always holds.

Assume now that  $\mJ$ is co-Rosenthal and let  $m\in WAP(\mA)^\perp$. Then $m\in \mJ^{\perp \perp}=i^{**}(\mJ^{**})$, so that $m=i^{**}(n)$
for some $n\in \mJ^{**}$ and  \[\langle n, i^*(f)\rangle=\langle m, f\rangle=0\quad\text{for every}\quad f\in WAP(\mA).\] We see that $n\in i^*(WAP(\mA))^\perp$
and so
$m\in i^{**}\bigl(i^*(WAP(\mA)^\perp)\bigr)$. Therefore, (i) implies (ii).

To check that (ii) implies (i), suppose otherwise that $\mJ^\perp$ is not contained in $WAP(\mA)$. Then there is some element $m\in WAP(\mA)^\perp$ which is not in $\mJ^{\perp\perp}=i^{**}(\mJ^{**})$.
In particular, $m$ is not in $i^{**}( i^*(WAP(\mA))^\perp),$ and so (ii) fails to hold.
\medskip

Let now  $\pi:\mA\to \mA/\mJ$ be the quotient map, and recall that $\mJ^\perp$
and $(\mA/\mJ)^*$ are identified via  $f=\widetilde{f}\circ\pi,$
where $f\in \mJ^\perp$ and $\widetilde{f}\in (\mA/\mJ)^*$.
Then for any two nets
$(\varphi_\alpha)$ and $ (\psi_\beta) $ in $\mA$ we find
\[\langle f, \varphi_\alpha \psi_\beta\rangle=\langle\widetilde f, \pi(\varphi_\alpha)\pi(\psi_\beta)\ra.\]
Since $\mA/\mJ$ is Arens regular if and only if $WAP(\mA/\mJ)=\mJ^\perp$, this quickly shows that Statements (i) and (iii) are equivalent.
\end{proof}

\begin{corollary}
  Let $G$ be a locally compact Abelian group and $E\subseteq  \widehat{G}$.  The following statements are equivalent.
\begin{enumerate}
\item $\widehat{G}\setminus E$  is a Rosenthal set, i.e., $E$ is a co-Rosenthal set.
\item The quotient Banach algebra $L^1(G)/L_E^1(G)$ is Arens regular.
\end{enumerate}
\end{corollary}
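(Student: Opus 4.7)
The plan is to derive the corollary as an essentially immediate consequence of Lemma \ref{co-R} applied to $\mA=L^1(G)$ and $\mJ=L^1_E(G)$, after translating the abstract co-Rosenthal condition into the harmonic-analytic language of the corollary. The key translation is already carried out in the body of the excerpt: one has the annihilator identity $L^1_E(G)^\perp = L^\infty_{\widehat G\setminus E}(G)$, so the ideal $L^1_E(G)$ is co-Rosenthal in the sense of Definition \ref{def1} exactly when $L^\infty_{\widehat G\setminus E}(G)\subseteq WAP(L^1(G))$. By the equivalence (i)$\Leftrightarrow$(iii) of Lemma \ref{co-R}, this latter inclusion is in turn equivalent to Arens regularity of the quotient $L^1(G)/L^1_E(G)$.

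For the implication (1)$\Rightarrow$(2), I would simply invoke the concluding observation of the subsection on special ideals of $L^1(G)$, which records the inclusion $L^1_E(G)^\perp\subseteq WAP(L^1(G))$ whenever $E$ is co-Rosenthal; combined with Lemma \ref{co-R} this delivers Arens regularity of the quotient.

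For the converse (2)$\Rightarrow$(1), Lemma \ref{co-R} gives $L^\infty_{\widehat G\setminus E}(G)=L^1_E(G)^\perp \subseteq WAP(L^1(G))$. The theorem of \"Ulger recalled in the excerpt says that $WAP(L^1(G))\subseteq C_b(G)$ for every locally compact group, so one concludes $L^\infty_{\widehat G\setminus E}(G)\subseteq C_b(G)$. This is precisely the definition of $\widehat G\setminus E$ being a Rosenthal set, i.e., of $E$ being co-Rosenthal.

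The proof is therefore essentially bookkeeping, and I do not foresee a substantive obstacle once Lemma \ref{co-R} and the annihilator calculation are in hand. The one point that deserves a moment of thought is that the annihilator equality $L^1_E(G)^\perp=L^\infty_{\widehat G\setminus E}(G)$, stated in the excerpt for compact $G$, must be read correctly in the locally compact Abelian setting; this is a routine verification via approximation of elements of $L^1_E(G)$ by convolutions against summability kernels whose Fourier transforms have compact support inside $E$, so that the dual pairing with any $f\in L^\infty_{\widehat G\setminus E}(G)$ annihilates each approximant.
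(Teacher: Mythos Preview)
Your approach matches the paper's: the corollary is stated without proof, as an immediate consequence of Lemma~\ref{co-R} together with the identifications $L^1_E(G)^\perp=L^\infty_{\widehat G\setminus E}(G)$ and $WAP(L^1(G))=C(G)$ established in the subsection on special ideals of $L^1(G)$.

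There is, however, one issue you flag incompletely. You rightly note that the annihilator identity $L^1_E(G)^\perp=L^\infty_{\widehat G\setminus E}(G)$ must be re-checked in the locally compact setting, and this is indeed routine. But the more delicate point in passing from compact to locally compact $G$ lies in the direction (1)$\Rightarrow$(2). The argument you invoke---that $E$ co-Rosenthal implies $L^1_E(G)^\perp\subseteq WAP(L^1(G))$---rests on the containment $C(G)\subseteq WAP(L^1(G))$, and the paper itself warns that this is the direction that ``only works for compact groups.'' For non-compact $G$ one has $WAP(L^1(G))\subsetneq C_b(G)$ in general, so the mere continuity of every $f\in L^\infty_{\widehat G\setminus E}(G)$ does not yield the abstract co-Rosenthal condition $\mJ^\perp\subseteq WAP(\mA)$ needed to feed into Lemma~\ref{co-R}. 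Your (2)$\Rightarrow$(1) argument via \"Ulger's inclusion $WAP(L^1(G))\subseteq C_b(G)$ is fine in either setting; it is (1)$\Rightarrow$(2) that requires either restricting to compact $G$ or supplying an additional argument that functions with spectrum in a Rosenthal set are not merely continuous but weakly almost periodic.
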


We refer to \cite[Lemma 4.5]{EFG2} for the proof of the following lemma.
\begin{lemma}  \label{lem5}
Let $ \mA $ be \wasabi Banach algebra. Let $ m \in \mA^{**}$ be such that $\mA^* \cdot m  \subseteq WAP(\mA)$ or  $m \cdot \mA^* \subseteq WAP(\mA)$. Then $m =\varphi_0 +r$ for some $\varphi_0\in \mA$ and $r\in WAP(\mA)^\perp$.
\end{lemma}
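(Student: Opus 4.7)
My plan is to use the direct-sum decomposition $\mA^{**} = e\mA^{**} \oplus WAP(\mA)^\perp$ recalled in the Introduction to split
\[
m = em + (1-e)m =: \varphi_0 + r,
\]
where $\varphi_0 = em \in e\mA^{**}$ and $r = (1-e)m \in (1-e)\mA^{**} = WAP(\mA)^\perp$. The summand $r$ is already of the required form, so the whole task reduces to showing $\varphi_0 \in \mA$.

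For this I would show that the left-multiplication operator $L_m \colon \mA \to \mA$, $\psi \mapsto \psi m$ (well defined because $\mA$ is an ideal in $\mA^{**}$) is \emph{weakly compact}. A short computation with the first Arens product gives $L_m^{**}(m') = m'm$ for every $m' \in \mA^{**}$; in particular $L_m^{**}(e) = em = \varphi_0$, so weak compactness of $L_m$ would give $L_m^{**}(\mA^{**}) \sub \mA$ and hence $\varphi_0 \in \mA$. The adjoint $L_m^* \colon \mA^* \to \mA^*$ is the map $f \mapsto m \cdot f$, and the hypothesis $m \cdot \mA^* \sub WAP(\mA)$ says precisely that $L_m^*$ takes values in $WAP(\mA)$. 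The symmetric case $\mA^* \cdot m \sub WAP(\mA)$ is handled identically, working with the second Arens product and the right-multiplication operator $R_m$.

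The critical step is to upgrade the range condition $L_m^*(\mA^*) \sub WAP(\mA)$ to weak compactness of $L_m^*$. This implication is false for general Banach algebras (e.g., if $\mA$ is Arens regular and non-reflexive, then $WAP(\mA) = \mA^*$ but there are plenty of non-weakly-compact operators into $\mA^*$), so the wscai hypotheses have to be used in an essential way. I would proceed by contradiction: if $L_m^*(B_{\mA^*})$ fails to be weakly relatively compact, then Eberlein--\v{S}mulian and Rosenthal's $\ell^1$-theorem produce a bounded sequence $(f_n) \sub \mA^*$ such that $(m \cdot f_n) \sub WAP(\mA)$ contains an $\ell^1$-type subsequence. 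Pairing this subsequence against a bounded sequence in $\mA$ drawn from the contractive approximate identity and invoking Grothendieck's double-limit characterization of $WAP(\mA)$ should exhibit iterated limits that fail to commute --- a contradiction, since each $m \cdot f_n$ is weakly almost periodic. Weak sequential completeness of $\mA$ enters to guarantee that the inner limits in Grothendieck's criterion actually exist inside $\mA$.

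The main obstacle is precisely this extraction argument: the three wscai hypotheses --- weakly sequentially complete, ideal in the bidual, and possessing a contractive approximate identity --- have to be combined carefully to turn the range condition $L_m^*(\mA^*) \sub WAP(\mA)$ into the weak compactness of $L_m^*$. Any single one of these three ingredients is insufficient for the argument, which is what makes the conclusion non-trivial.
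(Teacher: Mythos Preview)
The paper does not prove this lemma itself; it simply refers the reader to \cite[Lemma~4.5]{EFG2}. So there is no in-paper argument to compare your proposal against, and your task is really to reconstruct that cited proof.

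Your overall architecture is sound and is almost certainly the one used in \cite{EFG2}: write $m=em+(1-e)m$, observe that $(1-e)m\in WAP(\mA)^\perp$, and reduce everything to showing $em\in\mA$. Your identifications $L_m^{**}(m')=m'm$ and $L_m^\ast(f)=m\cdot f$ are correct, so weak compactness of $L_m$ would indeed finish the job via $L_m^{**}(e)=em$.

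The gap is in your sketch of the ``critical step.'' You propose to show $L_m^\ast$ is weakly compact by assuming $L_m^\ast(B_{\mA^\ast})$ is not relatively weakly compact and extracting, via Rosenthal's theorem, an $\ell^1$-subsequence from $(m\cdot f_n)$ inside $WAP(\mA)$. But Rosenthal's dichotomy only says that a bounded sequence without a \emph{weakly Cauchy} subsequence has an $\ell^1$-subsequence; since $\mA^\ast$ is not assumed weakly sequentially complete, a weakly Cauchy subsequence of $(m\cdot f_n)$ need not converge weakly, and you cannot rule out that branch of the dichotomy. The weak sequential completeness hypothesis lives on $\mA$, not on $\mA^\ast$, so the argument should be run there: if $L_m\colon\mA\to\mA$ is not weakly compact, choose a bounded sequence $(\varphi_n)$ in $\mA$ with $(\varphi_n m)$ admitting no weakly convergent subsequence; weak sequential completeness of $\mA$ then forbids a weakly Cauchy subsequence, and \emph{now} Rosenthal yields an $\ell^1$-subsequence of $(\varphi_n m)$ in $\mA$.

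Even after this correction, your closing line --- pair against the approximate identity and invoke Grothendieck's double-limit criterion --- is not yet a proof. You must name the two bounded sequences to be fed into the criterion for a specific $m\cdot f\in WAP(\mA)$ and verify that the iterated limits both exist and disagree; this is precisely where the bounded approximate identity, the ideal property and the $\ell^1$-structure of $(\varphi_n m)$ have to be woven together, and the details are not automatic. Your proposal correctly isolates the strategy and the obstacle, but the resolution of the obstacle remains a sketch; consult \cite[Lemma~4.5]{EFG2} (and the related arguments in \cite{BLP}) to see how the pieces are assembled.
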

%
%

We record here the following elementary facts  for later use.

\begin{lemma}\label{straight} For $f \in\mA^*$, $e\in\mA^{**}$ and $m\in \mJ^{**}$, we have
\begin{enumerate}
\item $m\cdot i^*(f)=i^*(m\cdot f)$.
\item $(em)\cdot f=e\cdot(m\cdot f)$.
\end{enumerate}
\end{lemma}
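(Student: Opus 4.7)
Both statements should come down to unwinding the definitions of the Arens module actions and the pairing between a subalgebra and its dual, so I expect both parts to be short computations rather than real arguments.

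For (i), my plan is to pair both sides against an arbitrary $\psi\in\mJ$. Unwinding the left-hand side gives
\[\<{m\cdot i^*(f),\psi}=\<{m,i^*(f)\cdot\psi},\]
while the right-hand side gives
\[\<{i^*(m\cdot f),\psi}=\<{m\cdot f,\psi}=\<{m,f\cdot\psi}.\]
So (i) reduces to the single assertion in $\mJ^*$ that $i^*(f)\cdot\psi=i^*(f\cdot\psi)$ whenever $\psi\in\mJ$, and this is forced by $\mJ$ being an ideal: evaluating either side at any $\chi\in\mJ$ produces $\<{f,\psi\chi}$, because $\psi\chi$ stays in $\mJ$.

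For (ii), I would use the definition of the first Arens product on $\mA^{**}$ to move $m$ through the pairing: for $\varphi\in\mA$,
\[\<{(em)\cdot f,\varphi}=\<{em,f\cdot\varphi}=\<{e,m\cdot(f\cdot\varphi)},\]
whereas $\<{e\cdot(m\cdot f),\varphi}=\<{e,(m\cdot f)\cdot\varphi}$. The identity therefore collapses to $m\cdot(f\cdot\varphi)=(m\cdot f)\cdot\varphi$ in $\mA^*$, which is the standard compatibility that turns $\mA^*$ into a left $\mA^{**}$-module under the first Arens construction; it is checked by testing on an arbitrary $\psi\in\mA$ to see that both sides equal $\<{m,f\cdot(\varphi\psi)}$.

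I don't expect a real obstacle here. The only point requiring care is consistency of notation: in (i) the element $m\in\mJ^{**}$ acts as a functional on $\mJ^*$, while in (ii) it acts through the embedding $i^{**}(m)\in\mA^{**}$ on $\mA^*$. Once these interpretations are pinned down, each line follows directly from the Arens definitions recalled in (\ref{260823a}).
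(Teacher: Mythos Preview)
Your proof is correct. The paper does not actually prove this lemma at all; it merely states ``We record here the following elementary facts for later use'' and leaves the verification to the reader, so your unwinding of the Arens definitions is exactly the kind of routine check the authors had in mind.
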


\begin{lemma}\label{1.5} Let $ \mA $ be a \wasabi Banach algebra and $\mJ$ a closed ideal in $\mA$.
If $\mJ$ is co-Rosenthal and Arens regular, then it is Riesz.
\end{lemma}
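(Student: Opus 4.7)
The plan is to prove the non-trivial inclusion $e\mJ^{**}\subseteq\mJ$; the other inclusion $\mJ\subseteq e\mJ^{**}$ is immediate, because $e\varphi=\varphi$ for every $\varphi\in\mA$ (weak$^{*}$-continuity of $n\mapsto n\varphi$ on $\mA^{**}$ and $e_{\alpha}\varphi\to\varphi$ in norm). Fix $m\in\mJ^{**}$ and view it inside $\mA^{**}$ via $i^{**}$. The strategy is to apply Lemma \ref{lem5}: once I have $m\cdot\mA^{*}\subseteq WAP(\mA)$, the lemma yields a decomposition $m=\varphi_{0}+r$ with $\varphi_{0}\in\mA$ and $r\in WAP(\mA)^{\perp}$. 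Lemma \ref{co-R}(ii) applied to the co-Rosenthal ideal $\mJ$ gives $WAP(\mA)^{\perp}\subseteq\mJ^{\perp\perp}$, hence $r\in\mJ^{\perp\perp}$ and $\varphi_{0}=m-r\in\mJ^{\perp\perp}\cap\mA=\mJ$. Multiplying by $e$ on the left and using $e\varphi_{0}=\varphi_{0}$ together with $er=0$ (since $r\in(1-e)\mA^{**}$ and $e^{2}=e$) produces $em=\varphi_{0}\in\mJ$, which is exactly the required inclusion.

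The heart of the argument is thus to prove that $m\cdot f\in WAP(\mA)$ for every $f\in\mA^{*}$. I would use the characterization of $WAP(\mA)$ recalled in the Introduction, namely $g\in WAP(\mA)$ if and only if $\langle pq,g\rangle=\langle p\diamond q,g\rangle$ for all $p,q\in\mA^{**}$. Applying the identity $\langle xn,f\rangle=\langle x,n\cdot f\rangle$ to $g=m\cdot f$ translates the WAP condition for $m\cdot f$ into the identity $(pq)m=(p\diamond q)m$ in $\mA^{**}$, that is, $rm=0$, where $r:=pq-p\diamond q$. Note that $r\in WAP(\mA)^{\perp}$ by the very definition of $WAP(\mA)$.

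At this point both hypotheses on $\mJ$ enter decisively. First, Lemma \ref{co-R}(ii) together with co-Rosenthality yields $WAP(\mA)^{\perp}\subseteq\mJ^{\perp\perp}$, so $r$ actually lies in the image of $\mJ^{**}$ inside $\mA^{**}$. Since $r$ and $m$ both belong to $\mJ^{**}$, Arens regularity of $\mJ$ makes the two Arens products agree on this pair, giving $rm=r\diamond m$. Finally, the structural identity $WAP(\mA)^{\perp}\diamond\mA^{**}=\{0\}$ from the \wasabi decomposition kills $r\diamond m$, so $rm=0$, as desired. I expect this step to be the main obstacle: a naive approach would try to extend WAP functionals from $\mJ$ to $\mA$, which amounts to the surjectivity of $i^{*}\colon WAP(\mA)\to WAP(\mJ)$ and is stronger than what is at hand. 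The trick is precisely to rewrite WAP-ness as the product identity $rm=0$ and then use co-Rosenthality to transfer $r$ into $\mJ^{**}$, where Arens regularity together with $WAP(\mA)^{\perp}\diamond\mA^{**}=\{0\}$ finishes the job.
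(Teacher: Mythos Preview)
Your proof is correct and shares the same skeleton as the paper's---show that a suitable translate lies in $WAP(\mA)$, apply Lemma~\ref{lem5}, and read off $em\in\mJ$---but your execution of the first step is different and in fact more direct.

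The paper works with $em$ rather than $m$: it shows, for $r\in i^\ast(WAP(\mA))^\perp\subseteq\mJ^{**}$, that $\langle r, i^\ast((em)\cdot f)\rangle=\langle r\diamond m, i^\ast(f)\rangle=0$ (using Arens regularity of $\mJ$ and the vanishing $i^\ast(WAP(\mA))^\perp\diamond\mJ^{**}=\{0\}$), concludes that $i^\ast((em)\cdot f)\in\overline{i^\ast(WAP(\mA))}$, and then needs a lifting argument through the quotient map $i^\ast$ to upgrade this to $(em)\cdot f\in WAP(\mA)$; only at that lifting step does $\mJ^\perp\subseteq WAP(\mA)$ enter. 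You instead invoke co-Rosenthality at the outset, via Lemma~\ref{co-R}(ii), to place any $r=pq-p\diamond q\in WAP(\mA)^\perp$ inside $\mJ^{\perp\perp}=i^{**}(\mJ^{**})$; since $i^{**}$ is a homomorphism for both Arens products, Arens regularity of $\mJ$ gives $rm=r\diamond m$ already in $\mA^{**}$, and $WAP(\mA)^\perp\diamond\mA^{**}=\{0\}$ kills it. This yields $m\cdot f\in WAP(\mA)$ directly, with no lifting step. Your endgame is also slightly different: rather than using $e\mA^{**}\cap WAP(\mA)^\perp=\{0\}$ and the weak closedness of $\mJ$ in $\mA$, you use co-Rosenthality once more to force $r_0\in\mJ^{\perp\perp}$ and hence $\varphi_0\in\mJ^{\perp\perp}\cap\mA=\mJ$. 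The net effect is a shorter argument that leans more heavily (and earlier) on Lemma~\ref{co-R}(ii).
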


\begin{proof}  Let   $(e_\alpha)_\alpha $ be a contractive approximate identity  in $\mA$ and $e$ be   a mixed identity in $ \mA^{**} $ associated with
$ (e_\alpha)_\alpha $.

Let   $m\in \mJ^{**}$ and fix $ f\in \mA^\ast$. Given any $r\in i^*(WAP(\mA))^\perp$, using Lemma \ref{straight}, one has
\begin{align*}
\langle r, i^*((em)\cdot  f)\rangle&=\langle i^{**}(r), (em)\cdot  f\rangle =\langle i^{**}(r), e\cdot(m\cdot  f)\rangle \\&=
\langle i^{**}(r)e, m\cdot  f\rangle
= \langle i^{**}(r), m\cdot  f\rangle=\langle r, i^*(m\cdot  f)\rangle\\&=
\langle r,m\cdot i^*( f)\rangle=\langle rm, i^*( f)\rangle.
\end{align*}
Taking into account  that $\mJ$ is assumed to be Arens regular, we find  \[\langle r m, i^*( f)\rangle=\langle r \diamondsuit m, i^*( f)\rangle=0.\] Therefore, $\<{r,i^\ast((em)\cdot  f)}=0$. Since this equality holds  for every $r\in i^*(WAP(\mA))^\perp$, we conclude that $ i^\ast(em\cdot  f)\in \overline{i^*(WAP(\mA))}$.

Let then $(g_n)_n$ be a sequence in  $ WAP(\mA)$   such that
$\lim_n i^*(g_n)=i^*((em)\cdot f)$.
Since the restriction mapping $i^\ast$ is a quotient map, one can find (see, e.g., the proof of  \cite[Theorem4.2(b)]{conway}) a sequence $ (h_n) $ in $ \mA^\ast $ and $ h_0 \in  \mA^\ast $ such that, for each $ n\in \mathbb{N} , i^*(g_n) = i^*(h_n) $, and $\lim_n h_n = h_0$.
As  $i^\ast(g_n) = i^*(h_n) $, one finds that  $g_n -h_n \in J^\perp\subseteq  WAP(\mA) $ and, hence, that    $h_n \in WAP(\mA)$ as well.  As a consequence, $h_0 \in WAP(\mA)$. The equality $i^*(em\cdot  f)=i^\ast(h_0)$   implies  that $em \cdot f\in h_0 + J^\perp \subseteq  WAP(\mA)$.

Since $em \cdot f\in WAP(\mA)$ for every $f\in \mA^\ast$, it follows from  Lemma \ref{lem5} that  $em=\varphi+r$ for some $\varphi \in\mA$ and $r\in WAP(\mA)^\perp$. Taking into account that $e\mA^{**}\cap WAP(\mA)^\perp=\{0\}$, we must have $em=\varphi$, which means that  $em\in \mA\cap \mJ^\bd$. As  elements in $\mA\cap \mJ^\bd$ can be obtained as weak$^\ast$-limits of nets in $\mJ$ and $\mJ$ is weakly closed in $\mA$, we conclude that $em\in \mJ$.

 The ideal, $\mJ$ is thus a Riesz ideal.
\end{proof}
To state our main theorem we require the following technical condition.

%
%

\begin{definition}
  We say that a   Banach algebra is a \emph{standard } \wasabi algebra, if it is in the \wasabi class  and  $e\mA^\bd$ contains a closed vector subspace $\mB$ such that  $e\mA^\bd=\mA\oplus_1 \mB$.
\end{definition}

As already remarked,  $e\mA^\bd$ is isometrically isomorphic to the multiplier algebra $M(\mA)$. The  Lebesgue decomposition theorem and its extension to the Fourier-Stieltjes algebra, see \cite{miao99} and \cite{kanilauschl03}, show that group algebras on compact groups and  Fourier algebras  on discrete amenable groups  are both standard \wasabi algebras.

\begin{theorem}\label{main}
Let $\mA$ have a bai and be an ideal in its second dual and let $\mJ$ be a closed ideal in $\mA$. Assume  that $e\mA^{**}=\mA\oplus_1 B$. If $\mJ$ is Riesz and co-Rosenthal, then $\mA$ is regular.
\end{theorem}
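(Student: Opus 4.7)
The plan is to mirror Lemma~\ref{1.5} with the Riesz and Arens-regularity hypotheses in swapped roles, and show that both summands $B$ and $WAP(\mA)^{\perp}$ in the decomposition $\mA^{**}=\mA\oplus B\oplus WAP(\mA)^{\perp}$ must vanish, so that $\mA$ is Arens regular (and, via the \wasabi structure, reflexive). As a preliminary, the two hypotheses on $\mJ$ give a clean description of $\mJ^{\perp\perp}$: by Riesz, $\mJ^{**}=\mJ\oplus i^{*}(WAP(\mA))^{\perp}$; Lemma~\ref{co-R} identifies the second summand with $WAP(\mA)^{\perp}$ through $i^{**}$; and since $\mJ\subseteq e\mA^{**}$ is transverse to $WAP(\mA)^{\perp}=(1-e)\mA^{**}$, we obtain
\[\mJ^{\perp\perp}\;=\;\mJ\oplus WAP(\mA)^{\perp}\quad\text{and consequently}\quad (\mA/\mJ)^{**}\;\cong\;\mA/\mJ\oplus B.\]

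For the main step I would prove that for every $m\in \mA^{**}$ and $f\in \mA^{*}$ the functional $(em)\cdot f$ lies in $WAP(\mA)$. Lemma~\ref{lem5} will then deliver $em=\varphi_{0}+r_{0}$ with $\varphi_{0}\in\mA$ and $r_{0}\in WAP(\mA)^{\perp}$; since $em\in e\mA^{**}$ is transverse to $WAP(\mA)^{\perp}$, $r_{0}=0$ and $em\in\mA$. As $m$ is arbitrary, $e\mA^{**}\subseteq\mA$, and the $\oplus_{1}$ splitting collapses $B$ to zero. To verify $(em)\cdot f\in WAP(\mA)$, I would take any $r\in WAP(\mA)^{\perp}$, write $r=i^{**}(k)$ with $k\in(1-e)\mJ^{**}$ (using the decomposition above), and combine Lemma~\ref{straight} with $re=r$ to reduce the pairing to $\langle r,(em)\cdot f\rangle=\langle rm,f\rangle$. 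The remaining pairing can be migrated into $\mJ^{**}$ through $i^{**}$, where the Riesz identity $e\mJ^{**}=\mJ$ combined with $\mJ^{\perp}\subseteq WAP(\mA)$ (from co-Rosenthal) should force cancellation. This is the dual of the calculation carried out in Lemma~\ref{1.5}.

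Once $B=\{0\}$, $\mA^{**}=\mA\oplus WAP(\mA)^{\perp}$. For any $r\in WAP(\mA)^{\perp}$, the decomposition of $\mJ^{\perp\perp}$ lets us write $r=i^{**}(k)$ with $k\in(1-e)\mJ^{**}$, so $e_{\alpha}r=i^{**}(e_{\alpha}\cdot k)\in\mJ$ and $e_{\alpha}\cdot k\to e\cdot k=0$ weak-$*$ in $\mJ^{**}$ for the contractive approximate identity $(e_{\alpha})$. The weakly null bai-sequence $(e_{n}r)\subseteq \mJ$ together with weak sequential completeness of $\mJ$ (inherited from $\mA$) promotes $r$ to an element of $\mA$; disjointness of $\mA$ and $WAP(\mA)^{\perp}$ then forces $r=0$, so $WAP(\mA)^{\perp}=\{0\}$ and $\mA^{**}=\mA$, giving reflexivity.

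The main obstacle is the pairing identity $(em)\cdot f\in WAP(\mA)$ in the middle paragraph. While the introduction hands us $\mA^{**}\cdot WAP(\mA)^{\perp}=\{0\}$, its first-Arens mirror $WAP(\mA)^{\perp}\cdot\mA^{**}=\{0\}$---to which the required identity reduces---is not available for free; extracting it requires a delicate argument combining the factorisation of each $r\in WAP(\mA)^{\perp}$ through $\mJ^{**}$, the Riesz identity $e\mJ^{**}=\mJ$, and the $\oplus_{1}$ splitting to force the appropriate components to cancel.
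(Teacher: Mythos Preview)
Your preliminary computation—that Riesz and co-Rosenthal together yield $\mJ^{\perp\perp}=\mJ\oplus WAP(\mA)^\perp$ and hence $(\mA/\mJ)^{**}\cong \mA/\mJ\oplus_1 B$—is correct and is exactly the paper's first step. The difficulty is in what follows.

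Your reduction $\langle r,(em)\cdot f\rangle=\langle rm,f\rangle$ is fine, but $rm$ need not vanish, and the ``delicate argument'' you defer to does not exist. In Lemma~\ref{1.5} the vanishing came from \emph{Arens regularity} of $\mJ$: one switched to $r\diamond m$ and used $i^*(WAP(\mA))^\perp\diamond\mJ^{**}=\{0\}$. Here you only have the Riesz condition $e\mJ^{**}=\mJ$, which constrains the $e$-component of elements of $\mJ^{**}$, not products of the form $(1-e)\mJ^{**}\cdot\mA^{**}$ under the \emph{first} Arens product. The most your factorisation $r=i^{**}(k)$ yields is $e(rm)=(er)m=0$, hence $rm\in WAP(\mA)^\perp$; but since $f\in\mA^\ast$ is arbitrary you need $rm=0$, which is precisely the identity $WAP(\mA)^\perp\cdot\mA^{**}=\{0\}$ you already flagged as unavailable. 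Trying to use the $\oplus_1$ splitting makes the circularity explicit: writing $em=a+b$ with $a\in\mA$, $b\in B$, one gets $ra\in\mA\cap WAP(\mA)^\perp=\{0\}$ but is left with $rb$, and $b\in B$ is exactly what you are trying to kill. Your final paragraph is also incomplete: $(e_n r)$ is already weakly convergent (to $0$, not to $r$), so weak sequential completeness adds nothing and cannot ``promote'' $r$ into $\mA$.

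The paper takes a different route after the decomposition $(\mA/\mJ)^{**}=\mA/\mJ\oplus_1 B$. It observes that the map $(u,v)\mapsto(u,-v)$ is an isometric involution on $(\mA/\mJ)^{**}$ whose fixed-point set is $\mA/\mJ$, and invokes a result of Godefroy and Lust-Piquard to conclude that $\mA/\mJ$ is weakly sequentially complete. Then $\mA/\mJ$ is itself a \wasabi algebra, hence strongly Arens irregular by Baker--Lau--Pym; combined with Arens regularity of $\mA/\mJ$ (from co-Rosenthal via Lemma~\ref{co-R}) this forces $\mA/\mJ$ to be reflexive, so $B=\{0\}$. Finally $e\mA^{**}=\mA$ means $\mA$ is a Riesz ideal in itself, and \"Ulger's theorem delivers Arens regularity. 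The argument thus passes through the \emph{quotient} and three external structural results, rather than through a direct pairing computation in $\mA^{**}$.
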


\begin{proof}
We prove first that the quotient
algebra $\mA/\mJ$ is weakly sequentially complete. Note to begin with that, $\mJ$ being both co-Rosenthal and Riesz,  Lemma \ref{co-R} yields
\begin{align*}(\mA/\mJ)^{**}=\mA^{**}/i^{**}(\mJ^{**})&=\frac{e\mA^\bd\oplus WAP(\mA)^\perp}{{i^{**}(e\mJ^{**})\oplus i^{**}(i^*(WAP(\mA))^\perp})}\\&=\frac{e\mA^\bd}{i^{**}(e\mJ^{**})}=\frac{e\mA^\bd}{\mJ}\\&=\frac{\mA\oplus_1 B}{\mJ}=\frac{\mA}{\mJ}\oplus_1 B.\end{align*}

 Define
$S\colon  (\mA/\mJ)^{**}\to  (\mA/\mJ)^{**}$ by \[S\bigl((u,v)\bigr)=(u,-v)\quad\text{with}\quad u\in \mA/\mJ\;\text{ and}\;v\in B.\] Then $S$ is an isometry and, if $I$ denotes the identity map on $( \mA/ \mJ)^\bd$, then $\ker(S-I)=\mA/\mJ$. By \cite[Corollary II.2]{godelust90}, $\mA/\mJ$ is weakly sequentially complete.

Since $\mA/\mJ$ has a bai, namely $(e_\alpha+\mJ)$, where $(e_\alpha)$ is a bai in $\mA,$ and $\mA/\mJ$ is clearly a closed ideal
in $(\mA/\mJ)^{**}$.
  By \cite[Theorem 2.1(iii)]{BLP}, this implies that $\mA/\mJ$ is strongly Arens irregular.
Since $\mA/\mJ$ is Arens regular by Lemma \ref{co-R}, $\mA/\mJ$ must be reflexive.
Therefore, \[\mA/\mJ=(\mA/\mJ)^{**}=\mA/\mJ\oplus_1 B,\] and so $B$ must be trivial. Thus $e\mA^\bd=\mA$ so that $\mA$ is a  Riesz ideal in itself. By \cite[Theorem 5.2]{EFG2} (see, as well,  \cite{Ulger11} and \cite{EFG1} for the particular case $\mA=L^1(G)$)
 $\mA$ is  Arens regular, as required.
\end{proof}

We can now answer Garth's question for \wasabi  algebras.

\begin{corollary} Let $\mA$ be a standard \wasabi algebra and $\mJ$ be a closed ideal in $\mA$.
If both $\mJ$ and $\mA/\mJ$ are Arens regular, then $\mA$ is reflexive.
\end{corollary}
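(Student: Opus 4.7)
The plan is to reduce the corollary to Theorem \ref{main} by translating the Arens regularity of $\mJ$ and $\mA/\mJ$ into the hypotheses ``Riesz'' and ``co-Rosenthal'' that feed the theorem, and then to convert the Arens regularity conclusion of Theorem \ref{main} into reflexivity using the decomposition $\mA^{**}=e\mA^{**}\oplus WAP(\mA)^\perp$ available for \wasabi algebras.

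First, from the assumption that $\mA/\mJ$ is Arens regular, Lemma \ref{co-R} immediately gives that $\mJ$ is co-Rosenthal. Combining this with the assumption that $\mJ$ itself is Arens regular, Lemma \ref{1.5} then yields that $\mJ$ is a Riesz ideal. At this point both hypotheses of Theorem \ref{main} are in place (since $\mA$ is standard \wasabi, it has a bai, is an ideal in its bidual, and admits a decomposition $e\mA^{**}=\mA\oplus_1 B$), so the theorem applies and tells us that $\mA$ is Arens regular.

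To upgrade ``Arens regular'' to ``reflexive'', I would unpack what Arens regularity means for a \wasabi algebra together with the standardness hypothesis. On one hand, Arens regularity is equivalent to $WAP(\mA)=\mA^*$, hence $WAP(\mA)^\perp=\{0\}$; on the other hand, inspecting the proof of Theorem \ref{main} shows that the same argument that forces $B=\{0\}$ in the quotient decomposition also forces $e\mA^{**}=\mA$, i.e. the summand $B$ in the standard decomposition of $\mA$ itself is trivial. Putting these two facts together in the decomposition
\[\mA^{**}=e\mA^{**}\oplus WAP(\mA)^\perp=\mA\oplus\{0\}=\mA,\]
we conclude that $\mA$ is reflexive.

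The main obstacle, if any, is a bookkeeping one: one must make sure that Lemmas \ref{co-R} and \ref{1.5} really apply in the \wasabi setting needed for Theorem \ref{main}, and that the concluding step correctly combines Arens regularity with the standard decomposition. Once the chain Lemma \ref{co-R} $\to$ Lemma \ref{1.5} $\to$ Theorem \ref{main} $\to$ \wasabi decomposition is lined up, the statement follows almost formally.
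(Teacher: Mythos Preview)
Your argument is correct and follows the paper's proof almost verbatim through the chain Lemma~\ref{co-R} $\Rightarrow$ Lemma~\ref{1.5} $\Rightarrow$ Theorem~\ref{main}. The only divergence is in the final step, upgrading Arens regularity of $\mA$ to reflexivity. The paper invokes \cite[Theorem 2.1(iii)]{BLP} to conclude that every \wasabi algebra is strongly Arens irregular, and then notes that an algebra which is simultaneously Arens regular and strongly Arens irregular must be reflexive. You instead dip back into the proof of Theorem~\ref{main} to extract the intermediate fact $B=\{0\}$ (hence $e\mA^{**}=\mA$), and combine it with $WAP(\mA)^\perp=\{0\}$ to obtain $\mA^{**}=\mA$ directly from the decomposition. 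Both routes are valid; yours is more self-contained in that it avoids the external citation, at the cost of relying on an internal step of Theorem~\ref{main} rather than its stated conclusion.
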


\begin{proof} If $\mA/\mJ$ is Arens regular, then by Lemma \ref{co-R}, $\mJ$ is co-Rosenthal.
If in addition $\mJ$ is Arens regular, then by Lemma \ref{1.5}, $\mJ$ is Riesz.
So Theorem \ref{main} shows that $\mA$ is Arens regular. But since $\mA$ is a \wasabi algebra,
 \cite[Theorem 2.1(iii)]{BLP}  implies that $\mA$ is sAir. Thus, $\mA$ is reflexive.
\end{proof}

Next Corollary follows directly from the preceding one.
\begin{corollary} \label{Garth} A non-reflexive  standard \wasabi Banach  algebra $\mA$      cannot contain  a closed ideal $\mJ$ such that both $\mJ$ and the quotient $\mA/\mJ$ are  Arens regular.
\end{corollary}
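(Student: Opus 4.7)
The plan is to read this corollary as the contrapositive of the preceding corollary and to reassemble, in sequence, the results already established in this section. I would argue by contradiction: suppose that a non-reflexive \wasabi algebra $\mA$ does contain a closed ideal $\mJ$ with both $\mJ$ and $\mA/\mJ$ Arens regular, and derive the impossibility of this configuration.

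The chain of steps is straightforward. First, Arens regularity of $\mA/\mJ$ combined with Lemma \ref{co-R} yields that $\mJ$ is co-Rosenthal. Second, Arens regularity of $\mJ$ together with Lemma \ref{1.5} yields that $\mJ$ is a Riesz ideal. Third, interpreting the \wasabi hypothesis as standard \wasabi, so that the $\ell^1$-splitting $e\mA^{**}=\mA\oplus_1 B$ is available, Theorem \ref{main} applies and forces $\mA$ itself to be Arens regular. Fourth, since $\mA$ is \wasabi, \cite[Theorem 2.1(iii)]{BLP} says that $\mA$ is strongly Arens irregular; as remarked in the introduction, an algebra that is simultaneously Arens regular and strongly Arens irregular is reflexive. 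This contradicts the non-reflexivity of $\mA$ and completes the argument.

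No genuine obstacle arises: the whole thing is a four-line bookkeeping chain of previously proved statements. The only delicate point worth flagging is the standardness hypothesis implicit in the step invoking Theorem \ref{main}, for which one needs the decomposition $e\mA^{**}=\mA\oplus_1 B$; thus the corollary should be read within the standard \wasabi class, which, as observed earlier, still covers the principal examples $L^1(G)$ for $G$ compact Abelian and $A(G)$ for $G$ discrete amenable. An entirely different approach would be required if one hoped to remove the standardness assumption, but within the stated framework the proof needs nothing beyond the immediate contrapositive.
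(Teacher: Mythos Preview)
Your proposal is correct and follows exactly the paper's approach: the paper's proof is the single line ``This is clear from the previous corollary,'' and you have simply unpacked that corollary's chain (Lemma~\ref{co-R} $\Rightarrow$ co-Rosenthal, Lemma~\ref{1.5} $\Rightarrow$ Riesz, Theorem~\ref{main} $\Rightarrow$ Arens regular, then \cite[Theorem 2.1(iii)]{BLP} $\Rightarrow$ strongly Arens irregular, hence reflexive) and taken the contrapositive. Your flag about the standardness hypothesis is apt---the statement of Corollary~\ref{Garth} says only ``\wasabi'' while the proof, via the preceding corollary and Theorem~\ref{main}, indeed requires the $\ell^1$-splitting $e\mA^{**}=\mA\oplus_1 B$, so the corollary should be read for standard \wasabi algebras.
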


\begin{remarks} (a) \"Ulger asked in \cite{Ulger11} whether Arens regular closed ideals in the group algebra of a compact abelian group are necessarily Riesz ideals.  The question was re-stated in \cite{EFG2} for any \wasabi  algebra.
If the algebra is not weakly sequentially complete, then examples for which this is not true are easily found, for example
$c_0$ is a closed ideal in $\ell_\infty,$ has a bai, is Arens regualar. But $c_0$ is not a Riesz ideal in $\ell_\infty$

(b)
It is known \cite{drespigno75} that in the dual $\widehat{G}$ of a compact Abelian group $G$, the union of a Riesz set and a Rosenthal set is a Riesz set. Since  $\widehat{G}$ itself is not Riesz unless $G$ is finite, a  Riesz subset of an infinite group can never be co-Rosenthal.

  By this reason, Lemma \ref{1.5} can be restated in this context.
	\begin{corollary}\label{Garth2}
A co-Rosenthal ideal of $L^1(G)$, $G$ compact infinite, is never Arens regular.
\end{corollary}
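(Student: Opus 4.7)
The plan is to argue by contradiction using Lemma \ref{1.5} together with the Drury--Pigno theorem recorded in part (b) of the preceding remark. Suppose, towards a contradiction, that $\mJ$ is a co-Rosenthal ideal of $L^1(G)$ which is also Arens regular. Since $L^1(G)$ is a \wasabi algebra, Lemma \ref{1.5} immediately yields that $\mJ$ is Riesz as well.

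Next I would translate these two properties of the ideal $\mJ$ into properties of the underlying subset of $\widehat{G}$. Because every closed ideal of $L^1(G)$ has the form $L^1_E(G)$ for some $E \subseteq \widehat{G}$, write $\mJ = L^1_E(G)$. The remark following Definition \ref{def1} identifies $\mJ$ being a Riesz ideal with the equality $L^1_E(G) = M_E(G)$, that is, with $E$ being a Riesz set in the classical harmonic-analytic sense. On the other hand, the corollary following Lemma \ref{co-R} gives that $\mJ$ is co-Rosenthal if and only if the complement $\widehat{G}\setminus E$ is a Rosenthal set.

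Finally I would invoke the Drury--Pigno result quoted in the remark: in the dual of a compact Abelian group, the union of a Riesz set with a Rosenthal set is again a Riesz set. Applying this to the decomposition $\widehat{G} = E \cup (\widehat{G}\setminus E)$, we would conclude that $\widehat{G}$ itself is a Riesz set. But this contradicts the fact, also recalled in the remark, that $\widehat{G}$ is never a Riesz set when $G$ is infinite, completing the proof.

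The argument is almost entirely a bookkeeping exercise once the dictionary between the Banach-algebraic notions (Riesz ideal, co-Rosenthal ideal) and the classical set-theoretic notions (Riesz set, Rosenthal set) is in place, so the only possible obstacle is making sure those identifications are invoked cleanly; no new analytic input beyond Lemma \ref{1.5} and Drury--Pigno is needed.
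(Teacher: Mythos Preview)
Your argument is correct and is exactly the paper's own route: the corollary is presented as a restatement of Lemma \ref{1.5} in the $L^1(G)$ setting, combined with the Dressler--Pigno fact that a Riesz set and a Rosenthal set cannot partition $\widehat{G}$ when $G$ is infinite. One small slip: the result you cite as ``Drury--Pigno'' is due to Dressler and Pigno (the reference \cite{drespigno75} in the paper).
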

		One can but wonder whether this result holds true for more general  \wasabi algebras.

  Corollary \ref{Garth} can  be recast in  the light of  Corollary  \ref{Garth2}.
      \begin{corollary}
      Let $G$ be an infinite compact Abelian group.
      If $E\subseteq \widehat{G}$ is such that  $L^1(G)/L_E^1(G)$ is Arens regular, then $L_E^1(G)$ is not Arens regular.
          \end{corollary}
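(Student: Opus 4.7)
This corollary is a direct specialization of Corollary \ref{Garth} to $\mA = L^1(G)$ with $\mJ = L_E^1(G)$, so the proof reduces to checking that $L^1(G)$ fits the abstract framework and then quoting. First I would confirm that $L^1(G)$ is a standard \wasabi algebra. This is already recorded in the paragraph following the definition of \emph{standard \wasabi algebra}: the classical Lebesgue decomposition $M(G) = L^1(G) \oplus_1 M_s(G)$, combined with the isometric identification $eL^1(G)^{**} \simeq M(G)$, yields the required splitting $eL^1(G)^{**} = L^1(G) \oplus_1 \mB$, with $\mB$ the band of singular measures. The remaining ingredients of the \wasabi property (weak sequential completeness of $L^1(G)$, the ideal property in its bidual, and the existence of a contractive approximate identity coming e.g.\ from trigonometric polynomials) are standard facts for $G$ compact Abelian.

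Next I would observe that, since $G$ is infinite, $L^1(G)$ is infinite-dimensional and, being an $L^1$-space, non-reflexive. Moreover, $L_E^1(G)$ is genuinely a closed ideal of $L^1(G)$ since every closed ideal of $L^1(G)$ is of this form (\cite[Theorem 38.7]{HR70}), so the abstract hypotheses of Corollary \ref{Garth} are applicable to this $\mJ$.

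Finally I would invoke Corollary \ref{Garth}: a non-reflexive \wasabi Banach algebra cannot contain a closed ideal $\mJ$ such that both $\mJ$ and $\mA/\mJ$ are Arens regular. Applied to $\mA = L^1(G)$ and $\mJ = L_E^1(G)$, the standing hypothesis that $L^1(G)/L_E^1(G)$ is Arens regular then rules out that $L_E^1(G)$ be Arens regular as well, which is the claim. There is no substantive obstacle in this argument; the genuine work has been done in Theorem \ref{main} and Lemma \ref{1.5}, and this corollary is nothing more than the concrete reading of Corollary \ref{Garth} in the harmonic-analytic setting.
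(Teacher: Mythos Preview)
Your proposal is correct and matches the paper's approach: the paper presents this corollary explicitly as ``Corollary~\ref{Garth} \ldots\ recast in this light,'' i.e., as the specialization of Corollary~\ref{Garth} to $\mA=L^1(G)$, which is precisely what you do. One minor remark: you do not actually need \cite[Theorem 38.7]{HR70} here, since you only use that $L^1_E(G)$ \emph{is} a closed ideal, not that every closed ideal arises this way.
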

\end{remarks}

\section{Examples} In the previous section, we proved that
if a Banach algebra $\mA$ satisfies the following properties
\begin{enumerate}
\item $\mA$ is a closed two-sided ideal in $A^{**},$
\item $\mA$ contains a contractive approximate identity,
\item $\mA$ is weakly sequentially complete,
\item  for a mixed identity $e\in \mA^{**}$, the space $e\mA^{**}$ is a direct sum $\mA\oplus \mB$ for some closed vector subspace $\mB$,
\item $\mA$ contains a closed ideal $\mJ$ such that both $\mJ$ and $\mA/\mJ$ are Arens regular, i.e., $\mA$ satisfies
 Garth's conditions,
\end{enumerate}
then $\mA$ is reflexive, that is, $\mA$ agrees  strongly with Garth's conclusion.

In this section, we give   examples of Banach  algebras that satisfy Garth's conditions (v) but are not Arens regular. Of course, each of the algebras lacks some of the properties (i)--(iv), we have tried to keep as many of them as possible.

Our examples are built on certain \emph{semidirect} products constructed through the natural right and left dual actions of  a Banach algebra $\mB$ on its dual $ \mB^*$. For $x\in \mB^*$ and $\varphi,\:\psi\in \mB$, these actions  are given by
\begin{align*}
\tag{Right action}  \langle x\varphi, \psi\rangle&=\langle x, \varphi \psi\rangle\quad \quad  \mbox{ and }\\
\tag{Left action}\langle \varphi x, \psi\rangle&=\langle x, \psi\varphi\rangle.
 \end{align*}
Following the general procedure described by Arens in \cite{arens51} for forming adjoints of bilinear maps,
the right and left module actions of $\mB$ on $\mB^{*}$ extend to right and  left module actions of $\mB^{**}$ on $\mB^{***}.$
The extensions are  reached after three steps, akin to those leading   the Arens products.

 For any  $p\in \mB^{***}$ and
$m\in \mB^\bd$,  $pm$ is defined after defining $mf\in \mB^\bd$, $f\in \mB^\bd$  and $fx\in\mB^\ast$, $x\in \mB^\ast$:
\begin{align}\notag  \<{pm,f}&=\<{p,mf}, 
\\
\<{mf,x}&=\<{m,fx}, \;  \label{mf}
\\
  \<{ fx,\varphi}&=\<{f, x\varphi}, \quad \varphi \in \mB.\notag
\end{align}
The action and $ mp\in\mB^{***}$ is defined in the same vein \begin{align}\notag \<{mp,f}&=\<{m,pf},
\\
\<{pf,x}&=\<{p,fx}, \;  \label{pf}
\\
  \<{ fx,\varphi}&=\<{f, \varphi x}, \quad \varphi \in \mB.\notag\end{align}

We now define the Banach algebras that constitute our examples.
\begin{definition}
Let $\mB$ be a Banach algebra with dual $\mB^\ast$.  We define   three different Banach algebras with underlying Banach space $ \mB^*\oplus_1 \mB$.  The multiplications are defined as follows, for  $x,y\in \mB^*$, and $\varphi,\psi\in \mB$.
      \begin{description}
    \item[(i) The algebra  $\mathbf{\mB_{\mB^\ast}}$] $(x, \varphi)(y, \psi) = (x\psi , \varphi \psi)$,  if $\mB^\ast$ is regarded as a right $\mB$-module.
\item[(ii) The algebra $\mathbf{_{\mB^\ast}\mB}$]  $(x,  \varphi )(y, \psi) = ( \varphi y,  \varphi \psi)$,  if $\mB^\ast$ is regarded as a left $\mB$-module.
    \item[(iii)  The algebra    $\mathbf{_{\mB^\ast}\mB_{\mB^{\ast}}}       $]  $(x,  \varphi )(y, \psi) = (x\psi+  \varphi  y , \varphi\psi)$,  if $\mB^\ast $ is regarded  as a  $\mB$-bimodule.
\end{description}
\end{definition}

For $ p,q\in \mB^{\ast\ast\ast}$ and $ m,n\in \mB^\bd$, the first Arens product in these algebras  is then given by:
\begin{align*}
  \mathbf{(\mB_{\mB^\ast})^\bd}:  (p, m)(q,n) &= (pn, mn)\\
\mathbf{(_{\mB^\ast}\mB)^\bd}:  (p, m)(q,n)& = (mq, mn)\\
\mathbf{(_{\mB^\ast}\mB_{\mB^{\ast}})^\bd}: (p, m)(q,n) &= (pn+mq, mn),
\end{align*}
	 %
where $mn$ is the first Arens product of $m$ and $n$ in $\mB^\bd$ and $pn$ and $mq$ are given in \eqref{mf} and \eqref{pf}, respectively.

\begin{example}\label{leftideal} Let $\mB=c_0$  with pointwise product. The Banach algebra $\mA=_{\mB^\ast}\!\!\mB_{\mB^{\ast}}$ has the following properties:
\begin{enumerate}
      \item $  \mA$ is not weakly sequentially complete.
			\item $\mA$ is not standard.
  \item $\mA$ has a contractive approximate identity.
  \item $\mA$ is a two-sided  ideal in $\mA^\bd$.
	\item $\mA$ satisfies Dales's conditions.
	\item $\mA$ is not Arens regular.
\end{enumerate}
\end{example}

\begin{proof} The algebra  $c_0$ has a contractive approximate identity $(e_n)$, where $e_n=\Cf{\{1,...,n\}}$ for every $n\in\N$. It is a closed two-sided ideal in its second conjugate $\ell_\infty$. The  module action of $c_0$
on $\ell_1$ turns out to be pointwise multiplication, and so $\mA$ is a commutative Banach algebra.

The sequence $(0,e_n)$
is then a  contractive approximate identity  in $\mA$,  since \begin{align*}\lim_n\|xe_n-x\|_1=\lim_n \norm{\varphi e_n  -\varphi}_\infty=0 \quad\text{for every}\quad x\in\ell_1, \varphi\in c_0.
\end{align*}  To see that $\mA$ is a left ideal in $\mA^\bd,$  recall that \[(p,m)(x,\varphi)=(p\varphi+mx, m\varphi)\quad \text{for}\quad (x,\varphi)\in \mA,\; (p,m)\in \mA^\bd,\] and note that
$mx\in \ell_1$ and $m\varphi \in c_0$ since $c_0$ and $\ell_1$ are ideals in $\ell_\infty$ (under pointwise multiplication).
So we only need
to check that $p\varphi\in \ell_1$ for every $p\in \ell_1^\bd$ and $\varphi\in c_0.$
We decompose $p=x_0+p_0$ with $x_0\in \ell_1$ and $p_0\in c_0^\perp$. To see that $p\varphi \in \ell_1$
we consider a net $(f_\alpha)$ in $\ell_\infty$ that converges to 0 in the $\sigma(\ell_\infty, \ell_1)$-topology. Then, using that
$\varphi f_\alpha  \in c_0$ and that $x_0 \varphi\in \ell_1$,
\begin{align*}
\lim_\alpha \<{ p \varphi, f_\alpha}&=\lim_\alpha \<{p,\varphi f_\alpha}\\
&=\lim_\alpha \<{ x_0 \varphi, f_\alpha}+\lim_\alpha \<{p_0,\varphi f_\alpha}=0.
\end{align*}
It follows that $p\varphi \in \ell_1$,
as wanted.
Since $\mA$ is commutative, this shows that $\mA$ is also a right ideal in $\mA^{**}.$

To see that $\mA$ satisfies Dales' conditions, let $\mJ=\ell_1\times \{0\}.$
Then $\mJ$ is a closed two-sided ideal in $\mA$ since $(x,0)(y,\varphi)=(x\varphi,0)\quad\text{and}\quad (y,\varphi)(x,0)=(\varphi x,0)$,
and is clearly Arens regular since the operation in $\mJ$ is trivial.

The quotient $\mA/\mJ$ is isometrically isomorphic to  $c_0$. It  is, therefore, Arens regular.

  As is always the case with the dual action of a non-reflexive algebra with a brai \cite[Theorem 2.1]{EF}, the action of $c_0$ on $\ell_1$ is not regular, and therefore, neither is  $  \mA$.

	Since $c_0$ is not weakly sequentially complete, neither is the algebra $\mA$.

Let now $e=(0,\mathbf{1})$,   $\mathbf{1}$ standing for the constant 1-sequence in $\ell_\infty$,     be the  accumulation point of the approximate identity $(0,e_n)$ in $\mA^{\bd}$. Suppose  that $\mA$ is complemented in $e\mA^{\bd}$ and let $P\colon e\mA^\bd\to \mA$ be the corresponding projection. If $\pi_2\colon \mA=\mB^\ast \oplus\mB \to \mB$ is the second projection and we consider the embedding $i\colon \mB^\bd \to e\mA^\bd$ given by  $i(x)=(0,x)=(0,\mathbf{1})(0,x)$, then the composition of the following functions
\begin{align*}
  \mB^\bd \overset{i}{\to} e \mA^{\bd}\overset{P}{\to} \mA\overset{\pi_2}{\to} \mB
\end{align*}
is a bounded projection $\ell_\infty\to c_0$, whose existence  contradicts  the Phillips-Sobczyk theorem, see e.g. \cite[Theorem 2.5.5]{KA}.%
		\end{proof}
	\medskip
\begin{remark} If we consider the algebras $\mB_{\mB^\ast}$ or $_{\mB^{\ast}}\mB$, then the Banach algebra $\mA$ is not commutative, and  satisfies all the properties in Example \ref{leftideal} but the third one. It has just a contractive right (resp., left) approximate identity.

The element $(0,1)$ is just a right identity in $\mA^{**}$, and $(0,1)\mA^{**}=\{0\}\oplus_1\ell_\infty$ cannot be decomposed as $\mA\oplus_1 B$ since $c_0$ is not complemented in $\ell_\infty.$ So in this case, $\mA$ is not a standard algebra either. The rest of the statements follow in the same manner.
\end{remark}\medskip

To make Banach algebras modelled on spaces $\mB^\ast \oplus_1\mB$ weakly sequentially complete, one  needs nonreflexive Banach algebras that are  weakly sequentially complete and have
weakly sequentially complete duals. This is not easily achieved. Bourgain  and Delbaen managed to construct such a Banach space in  \cite{bourgdelb81}. One can define a Banach algebra structure on this    Banach space in  a standard, albeit quite trivial, way. The algebras $\mB_{\mB^\ast}$ and $_{\mB^\ast}\mB$ are then Banach algebras with   one-sided identity  which are 	one-sided ideals in their biduals.  These algebras satisfy Garth's conditions, but they are not Arens
	regular.

	Let   $\mB$ be the  weakly sequentially complete non-reflexive Banach space defined in \cite[Section 4]{bourgdelb81} whose  dual $\mB^*$ is   weakly sequentially complete as well.
		Fix $x_0\in \mB^*$ and $e\in \mB$ such that $\langle x_0,e\rangle=1.$
	We introduce an operation on $\mB$   by
	\begin{equation}\label{z1}\varphi  \psi=\langle x_0, \psi\rangle \varphi\end{equation}
turning it into a Banach algebra.

Then the following properties are easily verified.
\begin{enumerate}
\item  $e$ is a right identity for $\mB$.
\item $\mB$ cannot possess a blai $(e_\alpha)$ since $e_\alpha \varphi=\langle x_0,\varphi\rangle e_\alpha.$
\item  $\mB$ is a right ideal  but not a left ideal in its second dual.
\item  
 $\mB$  is Arens regular.
\end{enumerate}

The natural right module action of $\mB$ on $\mB^*$ is now given by
\begin{equation*}\label{natural}x\varphi= \langle x,\varphi\rangle x_0
\quad\text{for}\quad x\in \mB^*,\;\varphi\in\mB.
\end{equation*}

Let now $\mA= \mB_{\mB^\ast}$. The operation in $\mA$ simplifies to
\begin{equation*} \label{see}(x,\varphi)(y,\psi)  = (x\psi, \varphi\psi) =
( \langle x,\psi\rangle x_0,     \langle x_0,\psi\rangle \varphi),\end{equation*}
and so the first Arens product in $\mA^{**}$ is given by
\begin{align*} (p,m)(q,n)  & = (pn, mn) =
( \langle p,n\rangle x_0,     \langle n,x_0\rangle m).\end{align*}
From this identity, we see that $\mA$ is a right but not a left ideal in its second dual.
Note also that there is no net $(e_\alpha)$ in $\mB$ which acts as a brai for the right  $\mB$-module $\mB^*$,
since $\|xe_\alpha-x\|=\|\langle x,e_\alpha\rangle x_0-x\|$  converges to zero only if $x$ is a multiple of $x_0.$
This implies that $\mA$ does not possess a brai.
The algebra $\mA$ does not posses a blai either (use the observation (ii) above or argue directly).
And as in Example \ref{leftideal}, one  sees that $\mA$ satisfies Dales conditions.
Finally, since $\mB$ is not reflexive, it follows from \cite{EF}, that the natural right module action is not Arens regular, and so
the algebra $\mA$  is not Arens regular.
We summarize these observations in our second example.

\begin{example}\label{final} Let $\mA=\mB_{\mB^\ast}$ where $\mB$ is the space defined in \cite[Section 4]{bourgdelb81} with the operation (\ref{z1})
and the natural right module action of $\mB$ on $\mB^*.$ Then
\begin{enumerate}
\item $\mA$ is weakly sequentially complete.
\item $\mA$ is a    right but not a left ideal in its second dual.
\item  $\mA$ has neither a blai nor a blai.
\item $\mA$ satisfies Dales conditions.
\item $\mA$ is not Arens regular.
\end{enumerate}
\end{example}

\begin{remarks}
1) If we want the algebra $\mA$ in Example \ref{final} to be a left ideal in its second dual, then we take $\mB$ with the operation
\begin{equation}\label{z2}\varphi  \psi=\langle x_0, \varphi\rangle \psi,\end{equation}
 consider the left module action of $\mB$ on $\mB^{*}$
\begin{equation}\label{lnatural}\langle \varphi x, \psi\rangle=\langle x, \psi\varphi\rangle = \langle x_0,\psi\rangle\langle x,\varphi\rangle
\quad\text{for}\quad x\in \mB^*,\; \varphi,\psi\in \mB\end{equation}
so that  \[\varphi x= \langle x,\varphi\rangle x_0=x\varphi,\]
and then work with   $\mA=_{\mB^*}\!\!\mB$ so that
\begin{equation*}\label{seme}
(x,\varphi)(y,\psi)   = (\varphi y, \varphi\psi) =
(\langle y,\varphi\rangle x_0,\langle x_0,\varphi\rangle \psi).
\end{equation*}

 2) The other algebras $\mA$ which may also be considered based on Example \ref{final} are by taking
\begin{enumerate}
\item $\mB$ with the operation (\ref{z1}) and $\mA=_{\mB^*}\!\!\mB$ with the operation (\ref{seme}),

\item  $\mB$ with the operation (\ref{z2}) and $\mA=\mB_{\mB^\ast}$.
In both of these cases, the reader may verify directly that the algebra $\mA$ is Arens regular.
So these algebras are of no interest to us in this section.

\item $\mB$ with the operation (\ref{z1}) and $\mA=_{\mB^*}\!\!\mB_{\mB^\ast}$,

\item $\mB$ with the operation (\ref{z2}) and $\mA=_{\mB^*}\!\!\mB_{\mB^\ast}$.
In both of these cases, the algebra $\mA$ satisfies Dales conditions but it is not Arens regular.
 Here $\mA$ is not a one-sided ideal in its second dual and has neither a blai nor a blai.
\end{enumerate}
3) We hope to return with more examples in our next paper \cite{semidirect}.
\end{remarks}

\begin{example}
If we adjoin an identity to any of the algebras in the previous example, we obtain a weakly sequentially complete algebra with an identity, which satisfies Dales conditions but it is not Arens regular.  The algebra is not a one-sided ideal in its bidual.
\end{example}


\end{document}